\documentclass[11pt]{article}
\usepackage{amsmath,amssymb,amsfonts,amsthm}
\usepackage{float}
\numberwithin{equation}{section}
\newtheorem{theorem}{Theorem}[section]
\newtheorem{definition}{Definition}[section]
\newtheorem{lemma}[theorem]{Lemma}
\newtheorem{proposition}[theorem]{Proposition}
\newtheorem{corollary}[theorem]{Corollary}

\pdfpagewidth 8.5in
\pdfpageheight 11in
\setlength\topmargin{0in}
\setlength\headheight{0in}
\setlength\headsep{0in}
\setlength\textheight{7.7in}
\setlength\textwidth{6.5in}
\setlength\oddsidemargin{0in}
\setlength\evensidemargin{0in}
\setlength\parindent{0.25in}
\setlength\parskip{0.25in} 
\begin{document}
\begin{center}
{\Large{\textbf{Competition Graphs of Jaco Graphs and the Introduction of the Grog Number of a Simple Connected Graph}}} 
\end{center}
\vspace{0.5cm}
\large{\centerline{(Johan Kok, Susanth C, Sunny Joseph Kalayathankal)\footnote {\textbf {Affiliation of author:}\\
\noindent Johan Kok (Tshwane Metropolitan Police Department), City of Tshwane, Republic of South Africa\\
e-mail: kokkiek2@tshwane.gov.za\\ \\
\noindent Susanth C (Department of Mathematics, Vidya Academy of Science and Technology), Thalakkottukara, Thrissur-680501, Republic of India\\
e-mail: susanth\_c@yahoo.com\\ \\
\noindent Sunny Joseph Kalayathankal (Department of Mathematics, Kuriakose Elias College), Mannanam, Kottayam- 686561, Kerala, Republic of India\\
e-mail: sunnyjoseph2014@yahoo.com}}
\vspace{0.5cm}
\begin{abstract}
\noindent Let $G^\rightarrow$ be a simple connected directed graph on $n \geq 2$ vertices and let $V^*$ be a non-empty subset of $V(G^\rightarrow)$ and denote the undirected subgraph induced by $V^*$ by, $\langle V^* \rangle$. We show that the \emph{competition graph} of the Jaco graph $J_n(1), n \in \Bbb N, n \geq 5,$  denoted by $C(J_n(1))$ is given by:\\ \\
$C(J_n(1)) = \langle V^* \rangle_{V^* = \{v_i| 3 \leq i \leq n-1\}} -\{v_i v_{m_i}| m_i = i + d^+_{J_n(1)}(v_i), 3 \leq i \leq n-2\} \cup \{v_1, v_2, v_n\}.$\\ \\
\noindent Further to the above, the concept of the \emph{grog number} $g(G^\rightarrow)$ of a simple connected directed graph $G^\rightarrow$ on $n \geq 2$ vertices as well as the general \emph{grog number} of underlying graph $G$, will be introduced. The \emph{grog number} measures the efficiency of an \emph{optimal predator-prey strategy} if the simple directed graph models a ecological predator-prey web.\\ \\
We also pose four open problems for exploratory research.
\end{abstract}
\noindent {\footnotesize \textbf{Keywords:} Jaco graph, Competition graph, Grog number}\\ \\
\noindent {\footnotesize \textbf{AMS Classification Numbers:} 05C07, 05C12, 05C20, 05C38, 05C70} 
\section{Introduction}
\noindent  For a general reference to notation and concepts of graph theory see [1]. For ease of self-containess we shall briefly introduce the concept of a \emph{competition graph}.
\subsection{The Competition Graph of a simple connected directed graph $G^\rightarrow$}
\noindent The concept of the competition graph $C(G^\rightarrow)$ of a simple connected directed graph $G^\rightarrow$ on $n \geq 2$ vertices, was introduced by Joel Cohen in 1968 [2]. Much research has followed and recommended reading can be found in [4, 5, 6 \emph{together with all their references}]. The concept of competition graphs found application in amongst others, Coding theory, Channel allocation in communication, Information transmission, Complex systems modelled in energy and economic applications, Decionmaking based mainly on opinion influences and Predator-Prey dynamical systems. \\ \\
\noindent For a simple connected directed graph $G^\rightarrow$ with vertex set $V(G^\rightarrow)$ the competition graph $C(G^\rightarrow)$ is the simple graph \emph{(undirected and possibly disconnected)} having $V(C(G^\rightarrow)) = V(G^\rightarrow)$ and the edges $E(C(G^\rightarrow)) = \{vy|$ \emph{if at least one vertex} $w \in V(G^\rightarrow)$ \emph{exists such that the arcs} $(v, w), (y, w)$ \emph{exist}$\}.$\\ \\
Let $G^\rightarrow$ be a simple connected directed graph and let $V^*$ be a non-empty subset of $V(G^\rightarrow)$ and denote the undirected subgraph induced by $V^*$ by, $\langle V^* \rangle$.
\subsection{The Competition graph of the Jaco graph, $J_n(1), n \in \Bbb N$}
For ease of reference the definition and basic properties of the Jaco graph $J_n(1), n \in \Bbb N$ will be repeated.\\ \\
The infinite Jaco graph (\emph{order 1}) was introduced in [3], and defined by $V(J_\infty(1)) = \{v_i| i \in \Bbb N\}$, $E(J_\infty(1)) \subseteq \{(v_i, v_j)| i, j \in \Bbb N, i< j\}$ and $(v_i,v_ j) \in E(J_\infty(1))$ if and only if $2i - d^-(v_i) \geq j.$\\ \\ The graph has four fundamental properties which are; $V(J_\infty(1)) = \{v_i|i \in \Bbb N\}$ and, if $v_j$ is the head of an edge (arc) then the tail is always a vertex $v_i, i<j$ and, if $v_k,$ for smallest $k \in \Bbb N$ is a tail vertex then all vertices $v_ \ell, k< \ell<j$ are tails of arcs to $v_j$ and finally, the degree of vertex $k$ is $d(v_k) = k.$ The family of finite directed graphs are those limited to $n \in \Bbb N$ vertices by lobbing off all vertices (and edges arcing to vertices) $v_t, t > n.$ Hence, trivially we have $d(v_i) \leq i$ for $i \in \Bbb N.$
\noindent It is important to note that the general definition of a finite Jaco graph [3], prescribes a well-defined orientation. So we have one well-defined orientation of the $2^{\epsilon(J_n(1))}$ possible orientations.
\begin{theorem}
For the Jaco graph $J_n(1), n \in \Bbb N, n \geq 5,$  the competition graph $C(J_n(1))$ is given by:\\ \\
$C(J_n(1)) = \langle V^* \rangle_{V^* = \{v_i| 3 \leq i \leq n-1\}} -\{v_iv_{m_i}| m_i = i + d^+_{J_n(1)}(v_i), 3 \leq i \leq n-2\} \cup \{v_1, v_2, v_n\}.$
\end{theorem}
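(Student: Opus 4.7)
The plan is to split the argument into three parts: describe the out-neighbourhood structure of each vertex in $J_n(1)$, verify that $v_1, v_2, v_n$ are isolated in $C(J_n(1))$, and characterise the remaining competition edges on $V^* = \{v_i : 3 \leq i \leq n-1\}$.

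First I would use the defining inequality $(v_i, v_j) \in E(J_n(1)) \Leftrightarrow i < j \leq 2i - d^-(v_i)$ to conclude that the out-neighbours of any $v_i$ form the consecutive block $v_{i+1}, v_{i+2}, \ldots, v_{m_i}$ with $m_i = i + d^+_{J_n(1)}(v_i)$. The auxiliary fact I would use repeatedly is that every $v_j$ with $j \leq n-1$ still has $v_{j+1}$ as its smallest prey in $J_n(1)$: in $J_\infty(1)$ the vertex $v_j$ has total degree $j$ but at most $j-1$ possible in-neighbours from $\{v_1, \ldots, v_{j-1}\}$, forcing $d^+_{J_\infty(1)}(v_j) \geq 1$, and truncation at $v_n$ only trims the top of the out-block.

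With this setup, the isolation of $v_1, v_2, v_n$ is direct: $v_n$ has no out-arcs at all; the unique prey of $v_1$ is $v_2$ with $d^-(v_2) = 1$; and the unique prey of $v_2$ is $v_3$ with $d^-(v_3) = 1$, so no other vertex shares a prey with any of the three. For the main step, fix $v_i, v_j \in V^*$ with $i < j$: since the out-blocks of $v_i$ and $v_j$ begin at $v_{i+1}$ and $v_{j+1}$ respectively, the pair shares a prey iff $v_{j+1}$ itself lies in the out-block of $v_i$, i.e.\ iff $j+1 \leq m_i$, i.e.\ iff $j \leq m_i - 1$. Comparing this with $E(\langle V^* \rangle) = \{v_i v_j : 3 \leq i < j \leq n-1,\ j \leq m_i\}$, the competition edges on $V^*$ are obtained from $E(\langle V^* \rangle)$ by deleting precisely the ``last-prey'' edges $v_i v_{m_i}$ for $3 \leq i \leq n-2$; when $m_i = n$ the deletion is vacuous since $v_n \notin V^*$, consistent with the isolation of $v_n$ already established.

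The step requiring the most care is the first, namely confirming that truncating $J_\infty(1)$ to $J_n(1)$ does not shift the starting index of any out-neighbourhood block, so that the ``first common prey'' argument continues to pinpoint $v_{j+1}$; once this is in place, everything else reduces to a clean comparison of edge sets together with the short case check for the three isolated vertices.
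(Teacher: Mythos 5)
Your proof is correct, but it proceeds quite differently from the paper's. The paper argues by induction on $n$: it verifies the case $n=5$ by hand, then observes that passing from $J_k(1)$ (with Jaconian vertex $v_i$) to $J_{k+1}(1)$ adds the single new common prey $v_{k+1}$ with in-arcs from $v_{i+1},\dots,v_k$, and that this contributes exactly the new competition edges $v_{i+1}v_k,\dots,v_{(k+1)-2}v_k$ while $v_1$, $v_2$ and the new top vertex stay isolated. You instead give a direct, non-inductive characterisation: you isolate the structural fact that every out-neighbourhood in $J_n(1)$ is a consecutive block $v_{i+1},\dots,v_{m_i}$ whose starting index survives truncation (since $d^-(v_j)\leq j-1$ forces $d^+_{J_\infty(1)}(v_j)\geq 1$), so that $v_i$ and $v_j$ with $i<j$ compete precisely when $v_{j+1}$ lies in the block of $v_i$, i.e.\ $j\leq m_i-1$; the theorem then falls out of comparing this with $E(\langle V^*\rangle)=\{v_iv_j\mid j\leq m_i\}$. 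Your route makes it transparent \emph{why} the deleted edges are exactly the ``last-prey'' edges $v_iv_{m_i}$ (and why the deletion is vacuous when $m_i=n$), and it avoids the bookkeeping of checking that the inductive step reproduces the stated edge set for $k+1$; the paper's route has the advantage of tracking how $C(J_n(1))$ grows one layer at a time in step with the recursive construction of Jaco graphs via the Jaconian vertex, which matches the machinery used elsewhere in the paper. Both arguments rest on the same two easy observations about $v_1,v_2,v_n$ being isolated.
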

\begin{proof}
The well-defined orientation of Jaco graphs renders the competition graphs of $J_n(1)_{1 \leq n \leq 4}$ to be isolated vertices only. \\ \\
From the definition of a Jaco graph it follows clearly that $J_5(1)$ is the smallest Jaco graph for which the vertex $v_5$ exists such that arcs $(v_3, v_5)$ and $(v_4, v_5)$ exist to allow the edge $v_3v_4$ in the \emph{competition graph}. It follows easily that vertices $v_1, v_2$ are isolated vertices in $C(J_5(1))$.\\ \\
Since $d^+_{J_5(1)}(v_5) = 0$, vertex $v_5$ remains isolated in $C(J_5(1))$. Hence, the edges \emph{(previously arcs respectively)}, $v_3v_5$ and $v_4v_5$ do not exist in $C(J_5(1))$. It implies that the edge \emph{(previously an arc)}, $v_3v_5 = v_3v_{m_3}, m_3= 3 + d^+_{J_5(1)}(v_3), 3 \leq 3 \leq 3 = 5 - 2$ does not exist.  So the result:\\ \\
$C(J_n(1)) = \langle V^* \rangle_{V^* = \{v_i| 3 \leq i \leq n-1\}} -\{v_i v_{m_i}| m_i = i + d^+_{J_n(1)}(v_i), 3 \leq i \leq n-2\} \cup \{v_1, v_2, v_n\},$ holds for $n = 5.$\\ \\
We will now apply induction. Assume the result holds for $n = k$ hence, assume:\\ \\
$C(J_k(1)) = \langle V^* \rangle_{V^* = \{v_i| 3 \leq i \leq k-1\}} -\{v_iv_{m_i}| m_i = i + d^+_{J_k(1)}(v_i), 3 \leq i \leq k-2\} \cup \{v_1, v_2, v_k\}.$ Also assume $J_k(1)$ has Jaconian vertex $v_i$.\\ \\
Consider $n = k+1$. This extension adds the vertex $v_{k+1}$ and the set of arcs, $\{(v_{i+1},v_{k+1}), (v_{i+2},v_{k+1}),\\ (v_{i+3},v_{k+1}), ..., (v_k,v_{k+1})\}$ to $J_k(1)$ to obtain $J_{k+1}(1).$ From the definition of the \emph{competition graph} it follows that edges $v_{i+1}v_k, v_{i+2}v_k, ..., v_{k-1}v_k$ are defined in $C(J_{k+1}(1))$. Since, $k-1 = (k+1) -2$ we have the edges $v_{i+1}v_k, v_{i+2}v_k, ..., v_{(k+1)-2}v_k$ defined in $C(J_{k+1}(1))$.\\ \\
Clearly vertices $v_1$ and $v_2$ remain isolated vertices in $C(J_{k+1}(1))$. Equally evident is that $d^+_{J_{k+1}(1)}(v_{k+1}) = 0$, so $v_{k+1}$ is an isolated vertex in $C(J_{k+1}(1))$. Hence the result:\\ \\
$C(J_{k+1}(1)) = \langle V^* \rangle_{V^* = \{v_i| 3 \leq i \leq (k+1)-1\}} -\{v_i, v_{m_i}| m_i = i + d^+_{J_{k+1}(1)}(v_i), 3 \leq i \leq (k+1)-2\} \cup \{v_1, v_2, v_{k+1}\},$ holds.\\ \\
Thus the result:\\ \\
$C(J_n(1))_{n \geq 5} = \langle V^* \rangle_{V^* = \{v_i| 3 \leq i \leq n-1\}} -\{v_iv_{m_i}| m_i = i + d^+_{J_n(1)}(v_i), 3 \leq i \leq n-2\} \cup \{v_1, v_2, v_n\}$\\ \\
is settled through induction.
\end{proof}
\section{Grog Numbers of Simple Connected Directed Graphs}
For a simple connected graph $G$ on $n \geq 2$ vertices we consider any orientation $G^\rightarrow$ thereof. Label the vertices randomly $v_1, v_2, v_3, ..., v_n$. The aforesaid vertex labelling is called is called \emph{indicing} and a specific labelling pattern is called an \emph{indice} of $G$. Consider the graph to represent a predator-prey web. A vertex $v$ with $d^+_{G^\rightarrow}(v) = 0$ is exclusively \emph{prey.} To the contrary a vertex $w$ with $d^-_{G^\rightarrow}(w) = 0$ is exclusively \emph{predator.} A vertex $z$ with $d_{G^\rightarrow}(z) = d^+_{G^\rightarrow}(z)_{>0} + d^-_{G^\rightarrow}(z)_{>0}$ is a mix of \emph{predator-prey.}\\ \\
Let a vertex  labelled $v_i$ have an initial \emph{predator$_{\geq 0}$-prey$_{\geq 0}$ population} of exactly $\rho(v_i) = i.$ So generally there is no necessary relationship between the initial \emph{predator$_{\geq 0}$-prey$_{\geq 0}$ population} $\rho(v_i) = i$ and $d_{G^\rightarrow}(v_i) = d^+_{G^\rightarrow}(v_i)_{\geq 0} + d^-_{G^\rightarrow}(v_i)_{\geq 0}.$
\subsection{The Grog algorithm}
The \emph{predator-prey} dynamics now follow the following rules.\\ \\
\textbf{Grog algorithm:}\footnote{Admittedly, the Grog algorithm has been described informally. See Open problem 3.}\\
\noindent 0. Consider the initial graph $G^\rightarrow$.\\
\noindent 1. Choose any vertex $v_i$ and predator along any number $1 \leq \ell \leq d^+_{G^\rightarrow}(v_i) \leq i$ of out-arcs or along any number $1 \leq \ell \leq i < d^+_{G^\rightarrow}(v_i)$ of out-arcs, with only one predator per out-arc provided that the preyed upon vertex $v_j$ has $j \geq 1$.\\
\noindent 2. Remove the out-arcs along which were predatored and set $d_*^+(v_i) = d^+_{G^\rightarrow}(v_i) - \ell$, and for all vertices $v_{j\neq i}$ which fell prey, set $d_*^- (v_j) = d^-_{G^\rightarrow}(v_j) - 1$.\\
\noindent 3. Set the \emph{predator$_{\geq 0}$-prey$_{\geq 0}$ populations} $\rho_*(v_i) = i-\ell$ and $\rho_*(v_{j \neq i}) = j-1.$\\ 
\noindent 4. Consider the next amended graph $G^\rightarrow_*$ and apply rules 1, 2, 3 and 4 thereto if possible. If not possible, exit.\\ \\ 
\textbf{Observation 1:} We observe that since both the \emph{predator$_{\geq 0}$-prey$_{\geq 0}$ population} of all vertices, and the number of out-arcs embedded in $G^\rightarrow$ as well as those respectively found in the iterative amended graphs $G^\rightarrow_*$ are finite, the Grog algorithm will always terminate. So it can be said informally that the Grog algorithm is well-defined.\\ \\
We note that rule 1 allows us to choose any vertex $v_i$ per iteration. Following that, any number of the existing out-arcs from $v_i$ can be chosen to predator along. Collectively, the specific iterative choices will be called the \emph{predator-prey strategy}. Generally, a number of \emph{predator-prey strategies} may exist for a given $G^\rightarrow$ and the set of all possible strategies is denoted, $S(G^\rightarrow)$. Amongst the strategies there will be those who for a chosen vertex $v_i$, consecutively predator along the maximal number of out-arcs available at $v_i$.  These strategies are called \emph{greedy strategies} and a greedy strategy $s_k \in S(G^\rightarrow)$ is denoted $gs_k$.\\ \\
\textbf{Observation 2:} We observe that if a \emph{predator-prey strategy} $s_k \in S(G^\rightarrow)$ is repeated for a specific graph $G^\rightarrow$, the amended graph $G^\rightarrow_*$ found on termination (exit step) is unique. Put differently, we informally say the \emph{predator-prey strategy} $s_k$ is well-defined.\\ \\
In the final amended graph $G^\rightarrow_*$ (exit step) we will find each vertex $v_i$ has $\rho_{G^\rightarrow_*}(v_i) \geq 0.$ Some residual arcs may be present as well.
\begin{lemma}
In the final amended graph $G^\rightarrow_*$ (exit step) there will be at least one vertex $v_i$ and at least one vertex $v_j$ with $\rho_{G^\rightarrow_*}(v_i) = 0$ and $\rho_{G^\rightarrow_*}(v_j) > 0.$
\end{lemma}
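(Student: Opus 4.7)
The plan is to split the lemma into two independent claims and to establish each separately. The clean direction is the existence of some $v_i$ with $\rho_{G^\rightarrow_*}(v_i)=0$, for which the plan is to exhibit $v_1$ as the witness. Since $\rho(v_1)=1$ initially and populations are non-increasing under the algorithm, any single predation event involving $v_1$ --- whether as predator (losing $\ell\ge 1$) or as prey (losing $1$) --- immediately drives $\rho(v_1)$ to $0$. The only remaining scenario is that $v_1$ is never touched; here I would invoke that $G$ is connected with $n\ge 2$, so $v_1$ is incident to at least one arc of $G^\rightarrow$, which must survive in $G^\rightarrow_*$. The termination condition (rule 1 is inapplicable) forces the other endpoint of this surviving arc to have $\rho=0$, because otherwise one more valid predation would be available. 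Either branch supplies a vertex with $\rho_{G^\rightarrow_*}=0$.

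For the existence of some $v_j$ with $\rho_{G^\rightarrow_*}(v_j)>0$, the plan is to track the total population $T=\sum_{k=1}^n\rho(v_k)$. Each iteration with parameter $\ell$ drops $T$ by exactly $2\ell$ (the predator loses $\ell$ and each of the $\ell$ prey loses $1$), so at termination $T=n(n+1)/2-2E'$, where $E'$ is the total number of arcs ever consumed. Assume for contradiction that $\rho_{G^\rightarrow_*}(v_k)=0$ for every $k$; then $E'=n(n+1)/4$, immediately ruled out for $n\equiv 1,2\pmod{4}$ by the parity of $n(n+1)/2$. For $n\equiv 0,3\pmod{4}$, I would combine the per-vertex balance ``in-arcs used $+$ out-arcs used at $v_k$ equals $k$'' with the double-counting $\sum_k(\text{in-arcs used}+\text{out-arcs used})=2E'$, and then argue upward from $v_1$ (which can contribute to at most one used arc) and $v_2$ (at most two), etc., until the simplicity and degree bounds of $G^\rightarrow$ collide with the required balance.

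The hard part will be precisely that second half in the residues $n\equiv 0,3\pmod{4}$, where the crude counting does not immediately close the case. This is where the admitted informality of the grog algorithm becomes pivotal; the argument has to lean on the precise reading of rule 1, most plausibly by isolating a ``last-iteration bystander'' whose population is never touched by the final move, or by a structural argument that the sub-digraph of used arcs cannot simultaneously satisfy (in-arcs used $+$ out-arcs used) $=k$ at every $v_k$ while respecting the simple-digraph constraints. Pinning down the correct semantics of rule 1 is therefore the pivotal step of the proof.
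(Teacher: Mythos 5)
Your first claim (exhibiting a vertex with residual population $0$) is sound and is essentially the paper's own Part~1: the paper also takes $v_1$ as the candidate and observes that if $\rho_{G^\rightarrow_*}(v_1) \neq 0$ then every neighbour of $v_1$ must have residual population $0$, since otherwise a further predation along a surviving arc incident to $v_1$ would still be available at the exit step. Your version, which separates the case where $v_1$ is touched (and hence drops from $1$ to $0$) from the case where it is never touched, is if anything slightly more careful than the paper's.

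The genuine gap is in your second claim. You set up a global population count, reduce to the assumption that every vertex terminates at $0$, dispose of $n \equiv 1,2 \pmod 4$ by parity, and then leave $n \equiv 0,3 \pmod 4$ as ``the hard part,'' to be closed by an unspecified collision between your per-vertex balance and the degree bounds. That collision is in fact immediate, local, and makes the entire parity detour unnecessary: each predation event touching a fixed vertex $v$ consumes at least one arc incident to $v$ and lowers $\rho(v)$ by exactly $1$ for each such consumed arc (whether $v$ acts as predator along $\ell$ out-arcs, losing $\ell$, or as prey along one in-arc, losing $1$), and every arc is removed once used. Hence the cumulative loss at $v_n$ is at most $d(v_n) \leq n-1$, so $\rho_{G^\rightarrow_*}(v_n) \geq n-(n-1) = 1 > 0$ unconditionally --- which is the paper's entire Part~2. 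Equivalently, your own balance condition evaluated at $k=n$ would demand $n$ used arcs at $v_n$, impossible in a simple graph on $n$ vertices; so the case analysis modulo $4$ and the concern about the precise semantics of rule~1 are red herrings, and as written your proposal does not actually establish the second half of the lemma.
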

\begin{proof}
\noindent Part 1: Since $G$ is a simple connected graph the open neighborhood of $v_1$ has $N(v_1) \neq \emptyset.$ If in the final amended graph $G^\rightarrow_*$ (exit step), we have $\rho_{G^\rightarrow_*}(v_1) = 0$, Part 1 of the result holds. If $\rho_{G^\rightarrow_*}(v_1) \neq 0$, it implies that $\rho_{G^\rightarrow_*}(v_k) = 0,$ $\forall v_k \in N(v_1).$ Hence, Part 1 of the result holds.\\ \\
\noindent Part 2:  Since $G^\rightarrow$ is a a simple connected directed graph the extremal case is that $v_n$ is preyed upon or predator on, cumulatively over all other $n-1$ vertices of $G^\rightarrow$. Thus we have $\rho_{G^\rightarrow_*}(v_n) = \rho_{G^\rightarrow}(v_n) - (n-1) = n - (n-1) = 1 > 0.$ Hence, Part 2 of the result holds.
\end{proof}
\begin{definition}
For a predator-prey strategy $s_k$ and the final amended graph $G^\rightarrow_*$ (exit step), the cumulative residual, predator$_{\geq 0}$-prey$_{\geq 0}$ population over all vertices is denoted and defined to be $r_{s_k}(G^\rightarrow) = \sum\limits_{\forall v_i}\rho_{G^\rightarrow_*}(v_i).$
\end{definition}
\begin{definition}
The grog number of $G^\rightarrow$ is defined to be $g(G^\rightarrow) = min(r_{s_k}(G^\rightarrow))_{\forall s_k \in S(G^\rightarrow)}$ or equivalently, $g(G^\rightarrow) = min(r_{gs_k}(G^\rightarrow))_{\forall gs_k \in S(G^\rightarrow)}.$ 
\end{definition}
\begin{definition}
The grog number of a simple connected graph $G$ is defined to be $g(G) = min(g(G^\rightarrow))$  over all possible orientations of $G$.
\end{definition}
\noindent Consider a simple connected graph $G$ on $n \geq 2$ vertices with $\epsilon(G)$ edges. It is easy to see that the $n$ vertices can be randomly labelled \emph{(indiced)}, through $v_1, v_2, v_3, ..., v_n$ in $n!$ ways. Equally easy to see that the edges can be orientated in $2^{\epsilon(G)}$ ways. Hence, $\frac{1}{2}n!.2^{\epsilon(G)}$ distinct predator-prey webs can be constructed from $G^\rightarrow.$\\ \\
Let $\Bbb P_{s_k}(G^\rightarrow) =\{(v_i \rightsquigarrow v_j)| v_i$ is predator to $v_j\}.$  Call an arc $(v_i \rightsquigarrow v_j) \in \Bbb P_{s_k}(G^\rightarrow)$ a predator arc. Denote the cardinality of $\Bbb P_{s_k}(G^\rightarrow)$ by c($\Bbb P_{s_k}(G^\rightarrow))$.\\ \\
From the Grog algorithm the interative sequence of $s_k$ can be recorded as an ordered string. So if $s_k$ terminates (exit step) after $t$ iterations we can express $s_k$ as,\\ $s_k =((v_{i_1} \rightsquigarrow v_{j_1}), (v_{i_2} \rightsquigarrow v_{j_2}), (v_{i_3} \rightsquigarrow v_{j_3}), ..., (v_{i_t} \rightsquigarrow v_{j_t})).$ Clearly any pair of predator arcs say, $(v_{i_\ell}, v_{j_\ell})$ and $(v_{i_m}, v_{j_m}),$ $1 \leq \ell, m \leq t$ can interchange positions in the ordered string without changing the value of $r_{s_k}(G^\rightarrow).$ We say that $s_k$ has the commutative property.\\ \\
Clearly, pairs of predator arcs can be grouped together for preferred sequential application prior to other predator arcs, meaning $s_k =((v_{i_1} \rightsquigarrow v_{j_1}), (v_{i_2} \rightsquigarrow v_{j_2}), (v_{i_3} \rightsquigarrow v_{j_3}), ..., (v_{i_t} \rightsquigarrow v_{j_t})) = (((v_{i_s} \rightsquigarrow v_{j_s}), (v_{i_\ell} \rightsquigarrow v_{j_\ell})), (v_{i_w} \rightsquigarrow v_{j_w})_{\forall \emph{other arcs},w \neq s, \ell}).$ We say that $s_k$ has the associative property. The next two lemmas follow.
\begin{lemma}
Consider a specific orientation of a simple connected graph $G$ on $n \geq 2$ vertices labelled $v_1, v_2, ..., v_n$ say, $G^\rightarrow.$ For the initial cumulative predator$_{\geq 0}$-prey$_{\geq 0}$ population given by $\sum\limits_{i=1}^{n}\rho(v_i) = \sum\limits_{i=1}^{n}i$, we have that:
\begin{equation*} 
r_{s_k}(G^\rightarrow)
\begin{cases}
= even, &\text {if and only if, $\sum\limits_{i=1}^{n}i$ is even,}\\ \\ 
= uneven, &\text {if and only if, $\sum\limits_{i=1}^{n}i$ is uneven.}
\end{cases}
\end{equation*} 
\end{lemma}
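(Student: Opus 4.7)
The plan is to exhibit a modulo-$2$ invariant preserved by every iteration of the Grog algorithm. Write $T_{*} := \sum_{i} \rho_{G^\rightarrow_{*}}(v_i)$ for the cumulative population at the current stage of the algorithm. At the start, $T_{*} = \sum_{i=1}^{n} i$, and at the exit step $T_{*} = r_{s_k}(G^\rightarrow)$; so it suffices to show that $T_{*}$ changes by an even amount at each iteration, and then invoke a finite induction on the number of iterations (finite by Observation~1) to conclude $r_{s_k}(G^\rightarrow) \equiv \sum_{i=1}^{n} i \pmod{2}$, which is exactly both halves of the stated biconditional.

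For the per-step calculation, consider any single iteration: by rule~1 some vertex $v_i$ is selected and $\ell \geq 1$ of its currently available out-arcs are used for predation. By rules~2 and~3, the predator vertex $v_i$ has its population reduced by exactly $\ell$ (one unit per out-arc used), and each of the $\ell$ distinct preyed-upon vertices has its population reduced by exactly $1$. The net change in $T_{*}$ is therefore $-\ell + (-\ell)(1) = -2\ell$, which is even regardless of the vertex chosen, of the value of $\ell$, or of which particular out-arcs are used. The commutative and associative properties of $s_k$ already noted in the text mean the ordering of the ordered string recording $s_k$ is immaterial here, so one can run the induction directly on the natural iteration order.

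The main obstacle is interpretational rather than computational. Rule~3 is written as $\rho_{*}(v_i) = i - \ell$ and $\rho_{*}(v_{j}) = j - 1$, which would be inconsistent if one literally read $i$ and $j$ as fixed vertex labels, since a vertex may participate in several iterations of $s_k$. Before one can trust the $-2\ell$ computation above, one must pin down the convention that in rule~3 the symbols $i$ and $j$ refer to the \emph{current} populations $\rho(v_i)$ and $\rho(v_j)$ just before the iteration in question (equivalently, that each amended graph carries forward its updated populations as data for the next iteration). Once this convention is fixed, the per-step parity calculation and the induction are entirely routine, and the lemma follows.
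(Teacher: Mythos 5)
Your proof is correct and follows essentially the same route as the paper: both arguments rest on the observation that each predation event reduces the cumulative population by exactly $2$ (one unit at the predator, one at the prey), so the total decrease is even and the parity of $\sum_{i=1}^{n} i$ is preserved through to $r_{s_k}(G^\rightarrow)$. Your clarification of rule~3 is consistent with the convention the paper itself uses in its proof, where it writes $\rho_{*}(v_i) = \rho_{*-1}(v_i) - 1$ rather than $i - \ell$.
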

\begin{proof}
Note that if vertex $v_i$ predator along the arc $(v_i, v_j)$ in step $*$ of the Grog algorithm then $\rho_*(v_i) = \rho_{*-1}(v_i) -1$ and $\rho_*(v_j) = \rho_{*-1}(v_j) -1$ so the total reduction is always 2 for each predator arc in $\Bbb P_{s_k}.$ Hence, $2c(\Bbb P_{s_k}(G^\rightarrow))$ is always even.\\ \\
The two parts now follow immediately from Number Theory.
\end{proof}
\begin{lemma}
For a specific orientation of a simple connected graph $G$ on $n \geq 2$ vertices labelled $v_1, v_2, ..., v_n$ say, $G^\rightarrow$ we have that $c(\Bbb P_{s_k}(G^\rightarrow)) = \frac{1}{2}(\sum\limits_{i=1}^{n}i - r_{s_k}(G^\rightarrow)).$
\end{lemma}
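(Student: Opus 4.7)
The plan is to exploit the same accounting observation already used in the proof of the previous lemma, namely that every predator arc reduces the total cumulative population by exactly two. Once this is established, the claimed formula is just an algebraic rearrangement, so the proof is essentially a one-step bookkeeping argument rather than an induction or case analysis.

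First I would recall that the initial cumulative predator-prey population is $\sum_{i=1}^{n}\rho(v_i)=\sum_{i=1}^{n}i$ by the labelling convention of Subsection~2.1, and that after the Grog algorithm terminates the final cumulative population is, by Definition~2.1, exactly $r_{s_k}(G^\rightarrow)=\sum_{\forall v_i}\rho_{G^\rightarrow_*}(v_i)$. Second, I would invoke the per-arc decrement already recorded in the proof of the preceding lemma: whenever a predator arc $(v_i\rightsquigarrow v_j)\in\Bbb P_{s_k}(G^\rightarrow)$ is applied in step $*$, we have $\rho_*(v_i)=\rho_{*-1}(v_i)-1$ and $\rho_*(v_j)=\rho_{*-1}(v_j)-1$, so the total population drops by precisely $2$ per predator arc.

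Third, I would use the commutative and associative properties of $s_k$ noted just before the previous lemma to justify that the total reduction over the whole strategy is simply the sum of the per-arc reductions; there is no interaction term to worry about, so the total decrement is $2\,c(\Bbb P_{s_k}(G^\rightarrow))$. Equating the two expressions for this decrement gives
\begin{equation*}
\sum_{i=1}^{n}i \;-\; r_{s_k}(G^\rightarrow) \;=\; 2\,c(\Bbb P_{s_k}(G^\rightarrow)),
\end{equation*}
and solving for $c(\Bbb P_{s_k}(G^\rightarrow))$ yields the stated identity.

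Honestly there is no real obstacle here: the identity is a direct corollary of the previous lemma's key observation, and the only thing to be careful about is making clear that the total decrement decomposes cleanly as a sum over predator arcs (which is where the commutative/associative remarks are needed to rule out any hidden double counting). I would therefore keep the write-up short and cite the earlier lemma's accounting step rather than reprove it.
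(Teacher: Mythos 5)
Your proposal is correct and matches the paper's own proof, which likewise derives $r_{s_k}(G^\rightarrow)=\sum_{i=1}^{n}i-2c(\Bbb P_{s_k}(G^\rightarrow))$ from the per-arc decrement of $2$ established in the preceding lemma and then rearranges. The extra remark about commutativity/associativity is harmless but not needed beyond what the paper already assumes.
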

\begin{proof}
From Lemma 2.2 it follows that $r_{s_k}(G^\rightarrow) = \sum\limits_{i=1}^{n}i -2c(\Bbb P_{s_k}(G^\rightarrow)).$ Hence the result:\\ \\
$c(\Bbb P_{s_k}(G^\rightarrow)) = \frac{1}{2}(\sum\limits_{i=1}^{n}i - r_{s_k}(G^\rightarrow)).$
\end{proof}
\subsection{On Paths and Cycles}
\begin{proposition}
If a path $P_n, n\geq 3$ and any specific orientation thereof say, $P_n^\rightarrow$  is extended to $P^\rightarrow_{n+1}$ the residual population over all possible predator-prey strategies applicable to $P_{n+1}^\rightarrow$ is given by:
\begin{equation*} 
r_{s_k^*}(P^\rightarrow_{n+1})
\begin{cases}
= r_{s_k}(P^\rightarrow_n) +(n+1), &\text {if and only if $v_s = v_1$,}\\ \\ 
= r_{s_k}(P^\rightarrow_n) +(n-1), &\text {otherwise,}
\end{cases}
\end{equation*} 
and $s^*_k$ is the minimal deviation from $s_k$ to accommodate arcing to or from $v_{n+1}$ and, $v_{n+1}$ is linked to an end vertex $v_s$ of $P^\rightarrow_n$ or;
\begin{equation*} 
r_{s^*_k}(P^\rightarrow_{n+1})
\begin{cases}
= r_{s_k}(P^\rightarrow_n) + n, &\text {if and only if either $v_p = v_1$ or $v_q = v_1$,}\\ \\ 
= r_{s_k}(P^\rightarrow_n) + (n-1), &\text {otherwise,}
\end{cases}
\end{equation*} 
and $s^*_k$ is the minimal deviation from $s_k$ to accommodate arcing to or from $v_{n+1}$ and, $v_{n+1}$ squeesed inbetween two vertices $v_p, v_q$ of $P^\rightarrow_n$ with $1 \leq p,q \leq n$.
\end{proposition}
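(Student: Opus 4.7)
The plan is to reduce the whole claim to a count of predator arcs via Lemma 2.4. Applying that lemma to $P_n^\rightarrow$ and $P_{n+1}^\rightarrow$ and subtracting gives
\[
r_{s^*_k}(P_{n+1}^\rightarrow) - r_{s_k}(P_n^\rightarrow) = (n+1) - 2\Delta,
\]
where $\Delta = c(\Bbb P_{s^*_k}(P_{n+1}^\rightarrow)) - c(\Bbb P_{s_k}(P_n^\rightarrow))$ is the net number of predator arcs gained in passing from $s_k$ to its minimal deviation $s^*_k$. Every case in the proposition therefore reduces to pinning down $\Delta$ as a function of how $v_{n+1}$ is attached and of whether $v_1$ is one of its new neighbours.

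For the endpoint extension I would argue as follows. The single new edge at $v_s$ admits at most one new predator arc, so $\Delta \in \{0,1\}$. The key observation is that $v_1$ has initial population $1$, so at most one predator arc in any strategy can be incident at $v_1$; since $v_1$ is already an endpoint of $P_n$ with exactly one pre-existing incident edge, that one ``slot'' is already committed (or deliberately left unused) in $s_k$, and the minimal‑deviation rule forbids reshuffling it. Hence $v_s = v_1$ forces $\Delta = 0$ and gives increase $n+1$, while $v_s \neq v_1$ allows one new predator arc along the new edge (either orientation), giving $\Delta = 1$ and increase $n-1$. The squeeze case is handled by the same reasoning applied to the pair of replacement edges $v_p v_{n+1}$ and $v_{n+1} v_q$ that supplant $v_p v_q$: when neither $v_p$ nor $v_q$ equals $v_1$, both ends can absorb the extra incidences and $s^*_k$ trades one arc for two, giving $\Delta = 1$ and increase $n-1$; when one of $v_p, v_q$ is $v_1$, the population‑$1$ cap on $v_1$ limits how many of the two replacement edges can carry predator arcs, producing the remaining stated value.

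The main obstacle is the sub‑case analysis over orientations of the new edges. The endpoint extension has two orientations and the squeeze extension has four orientation combinations; within each, the informally described ``minimal deviation from $s_k$'' must be turned into a precise prescription so that $s^*_k$ is determined unambiguously. For every sub‑case I need to (i) exhibit an explicit $s^*_k$ realising the claimed $\Delta$, and (ii) argue that no alternative minimal deviation gives a different value; the $v_p = v_1$ (or $v_q = v_1$) squeeze sub‑case is the most delicate because the two new edges must be examined together under the single population‑$1$ constraint. A convenient running sanity check is Lemma 2.3: the parity of $r_{s^*_k}(P_{n+1}^\rightarrow)$ is forced by that of $\sum_{i=1}^{n+1} i$, and this pins down the admissible parity of $\Delta$ in every sub‑case.
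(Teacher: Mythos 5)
Your reduction via Lemma 2.3 is a genuinely different, and in principle cleaner, route than the paper's: the paper never invokes its own counting lemma here, but instead tracks residual populations vertex by vertex through an explicitly described strategy in each orientation sub-case (two for the endpoint attachment, four for the squeeze). Writing $r_{s^*_k}(P^\rightarrow_{n+1}) - r_{s_k}(P^\rightarrow_n) = (n+1) - 2\Delta$, with $\Delta$ the net gain in predator arcs, collapses all of those sub-cases into a single bookkeeping question, and your treatment of the endpoint case ($\Delta = 0$ when $v_s = v_1$ because the single unit of population at $v_1$ is already committed, $\Delta = 1$ otherwise) is essentially a rigorous version of what the paper argues informally. (One caveat there: if $s_k$ deliberately left $v_1$'s slot unused, the new edge could still carry an arc, so the ``if and only if'' really presupposes a greedy $s_k$; the paper is equally loose on this point.)

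The genuine gap is in the squeeze case. Your own identity forces the increment $(n+1) - 2\Delta$ to have the parity of $n+1$, so it can equal $n+1$, $n-1$, $n-3, \dots$ but never $n$; the value $n$ would require $\Delta = 1/2$. You assert that when $v_p = v_1$ or $v_q = v_1$ the population-$1$ cap ``produc[es] the remaining stated value,'' but you never exhibit an integer $\Delta$ achieving it, and none exists. Concretely, replacing the edge $v_1 v_q$ by the two edges $v_1 v_{n+1}$ and $v_{n+1} v_q$ changes the number of predator arcs carried on that stretch from at most one to at most two (the cap at $v_1$ blocks at most one of the two new edges, while $v_{n+1}$ has population $n+1 \geq 4$), so $\Delta \in \{0,1\}$ and the increment is $n+1$ or $n-1$, just as in the non-$v_1$ sub-case. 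The parity sanity check you mention at the end, had you actually applied it, refutes the $+n$ branch of the statement rather than confirming it: that branch is inconsistent with Lemmas 2.2 and 2.3. So as written the proposal fails at exactly the sub-case you flag as most delicate; an honest completion of your argument must either correct the $\Delta$-accounting or report that this branch of the proposition (and of the paper's own proof, which obtains $+n$ by untraceable population tracking) cannot be proved because it is false. A small slip besides: the counting lemma you need is Lemma 2.3, not 2.4.
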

\begin{proof}
Consider $P_{n+1}$. We begin by considering any specific orientation of $P_n$  having end vertices $v_s, v_t, 1 \leq s,t \leq n$ and denote it $P^\rightarrow_n$. Clearly the extension from $P^\rightarrow_n$ to $P^\rightarrow_{n+1}$ is made possible by linking (arcing) vertex $v_{n+1}$ to either $v_s$ or $v_t$ or \emph{squeesing} it between two vertices of $P^\rightarrow_n$, say $v_p, v_q$ with $1 \leq p,q \leq n$.\\ \\
Case 1: Assume $v_{n+1}$ is linked to $v_s$. \\ \\
Subcase 1.1: If we consider the arc $(v_{n+1}, v_s)$ in $P^\rightarrow_{n+1}$ the one strategy $r^*_{s_k}$, could be for $v_{n+1}$ to prey on $v_s$ first, leaving a portion of the residual population amounting to $n$ at $v_{n+1}$ and a portion of the residual population amounting to $s-1$ at $v_s$. However if $v_s = v_1$ the vertex $v_1$ cannot predator further on its neighbor in $P_n^\rightarrow$ anymore. So, if $s_k$ is applied from vertex $v_s$ throughout the rest of the remaining path then we have:
\begin{equation*} 
r_{s_k^*}(P^\rightarrow_{n+1})
\begin{cases}
= r_{s_k}(P^\rightarrow_n) +(n+1), &\text {if and only if $v_s = v_1$,}\\ \\ 
= r_{s_k}(P^\rightarrow_n) + (n-1), &\text {otherwise.}
\end{cases}
\end{equation*} 
If we consider the strategy to apply $s_k$ effecting from $v_s$ first, the vertex $v_{n+1}$ cannot predator at all if $v_s = v_1$. If $v_s \neq v_1$ there is a loss of 2 at $v_s$ and a loss of 1 at $v_{n+1}$ from the initial total  \emph{predator$_{\geq 0}$-prey$_{\geq 0}$ population} in $P^\rightarrow_{n+1}$ . Hence the result:
\begin{equation*} 
r_{s^*_k}(P^\rightarrow_{n+1})
\begin{cases}
= r_{s_k}(P^\rightarrow_n) + (n+1), &\text {if and only if $v_s = v_1$,}\\ \\ 
= r_{s_k}(P^\rightarrow_n) + (n-1), &\text {otherwise,}
\end{cases}
\end{equation*}
holds.\\ \\ 
Subcase 1.2: If we consider the arc $(v_s, v_{n+1})$ in $P_{n+1}^\rightarrow$ the one strategy $r^*_{s_k}$, could be for $v_s$ to prey on $v_{n+1}$ first, leaving a portion of the residual population amounting to $n$ at $v_{n+1}$ and a portion of the residual population amounting to $s-1$ at $v_s$. Now, if $s_k$ is applied from vertex $v_s$ throughout the rest of the remaining path then we have:
\begin{equation*} 
r_{s_k^*}(P^\rightarrow_{n+1})
\begin{cases}
= r_{s_k}(P^\rightarrow_n) +(n+1), &\text {if and only if $v_s = v_1$,}\\ \\ 
= r_{s_k}(P^\rightarrow_n) + (n-1), &\text {otherwise.}
\end{cases}
\end{equation*} 
If we consider the strategy to apply $s_k$ effecting from $v_s$ first, the vertex $v_{n+1}$ cannot predator at all if $v_s = v_1$. If $v_s \neq v_1$  there is a loss of 2 at $v_s$ and a loss of 1 at $v_{n+1}$ from the initial total  \emph{predator$_{\geq 0}$-prey$_{\geq 0}$ population} in $P^\rightarrow_{n+1}$ . Hence the result:
\begin{equation*} 
r_{s^*_k}(P^\rightarrow_{n+1})
\begin{cases}
= r_{s_k}(P^\rightarrow_n) + (n+1), &\text {if and only if $v_s = v_1$,}\\ \\ 
= r_{s_k}(P^\rightarrow_n) + (n-1), &\text {otherwise,}
\end{cases}
\end{equation*} 
holds.\\ \\
Case 2: Assume $v_{n+1}$ is linked to $v_t$. \\ \\
The proof of this case follows similar to that of Case 1.\\ \\
Case 3: Assume $v_{n+1}$ is squeesed inbetween vertices $v_p, v_q$ with $1 \leq p, q \leq n$.\\ \\
Subcase 3.1: The arcs $(v_p, v_{n+1})$ and $(v_{n+1}, v_q)$ exist in $P^\rightarrow_{n+1}.$ The one strategy $r^*_{s_k}$  in $P^\rightarrow_{n+1}$, could be for $v_p$ to prey on $v_{n+1}$ first, leaving a portion of the residual population amounting to $n$ at $v_{n+1}$ and a portion of the residual population amounting to $p-1$ at $v_p$. Then let $v_{n+1}$ prey on $v_q$, leaving a portion of the residual population amounting to $n-1$ at $v_{n+1}$ and a portion of the residual population amounting to $q-1$ at $v_q$. Now, if $s_k$ is applied throughout the rest of the remaining path then we have:\\ \\
\begin{equation*} 
r_{s^*_k}(P^\rightarrow_{n+1})
\begin{cases}
= r_{s_k}(P^\rightarrow_n) + n, &\text {if and only if either $v_p = v_1$ or $v_q = v_1$,}\\ \\ 
= r_{s_k}(P^\rightarrow_n) + (n-1), &\text {otherwise,}
\end{cases}
\end{equation*} 
holds.\\ \\
Subcase 3.2: The arcs $(v_p, v_{n+1})$ and $(v_q, v_{n+1})$ exist in $P^\rightarrow_{n+1}.$ By applying the strategy $s^*_k$ to accommodate vertex $v_{n+1}$ and with similar reasoning as in Subcase 3.1, the result:
\begin{equation*} 
r_{s^*_k}(P^\rightarrow_{n+1})
\begin{cases}
= r_{s_k}(P^\rightarrow_n) + n, &\text {if and only if either $v_p = v_1$ or $v_q = v_1$,}\\ \\ 
= r_{s_k}(P^\rightarrow_n) + (n-1), &\text {otherwise,}
\end{cases}
\end{equation*} 
holds.\\ \\
Subcase 3.3: The arcs $(v_{n+1}, v_p)$ and $(v_{n+1}, v_q)$ exist in $P^\rightarrow_{n+1}.$ By applying the strategy $s^*_k$ to accommodate vertex $v_{n+1}$ and with similar reasoning as in Subcase 3.1, the result:
\begin{equation*} 
r_{s^*_k}(P^\rightarrow_{n+1})
\begin{cases}
= r_{s_k}(P^\rightarrow_n) + n, &\text {if and only if either $v_p = v_1$ or $v_q = v_1$,}\\ \\ 
= r_{s_k}(P^\rightarrow_n) + (n-1), &\text {otherwise,}
\end{cases}
\end{equation*} 
holds.\\ \\
Subcase 3.4: The arcs $(v_q, v_{n+1})$ and $(v_{n+1}, v_p)$ exist in $P^\rightarrow_{n+1}.$ By applying the strategy $s^*_k$ to accommodate vertex $v_{n+1}$ and with similar reasoning as in Subcase 3.1, the result:
\begin{equation*} 
r_{s^*_k}(P^\rightarrow_{n+1})
\begin{cases}
= r_{s_k}(P^\rightarrow_n) + n, &\text {if and only if either $v_s = v_1$ or $v_q = v_1$,}\\ \\ 
= r_{s_k}(P^\rightarrow_n) + (n-1), &\text {otherwise,}
\end{cases}
\end{equation*} 
holds.
\end{proof}
\begin{corollary}
For a path $P_n, n\geq 3$ we have that $g(P_{n+1}) = g(P_n) + (n-1).$
\end{corollary}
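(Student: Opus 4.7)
The plan is to deduce the corollary directly from Proposition 2.4 by taking minima over all labeled orientations of $P_n$ and $P_{n+1}$. Scanning the five subcases of that proposition, the residual increment on extending $P_n^\rightarrow$ to $P_{n+1}^\rightarrow$ is always at least $n-1$, and it equals $n-1$ precisely when no neighbor of $v_{n+1}$ in $P_{n+1}^\rightarrow$ carries the label $v_1$.

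For the upper bound $g(P_{n+1}) \leq g(P_n) + (n-1)$, I would start with a labeled orientation of $P_n$ realizing $g(P_n)$ and append $v_{n+1}$ to an endpoint $v_s \neq v_1$. Such an endpoint exists because $P_n$ has two endpoints and only one of them can be labeled $v_1$. By the ``otherwise'' branch of Subcase 1.1 (with either arc orientation), the extended strategy $s_k^*$ on the new $P_{n+1}^\rightarrow$ has residual $g(P_n) + (n-1)$, yielding the bound.

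For the matching lower bound, I would take any labeled orientation of $P_{n+1}$ together with a strategy realizing $g(P_{n+1})$. Depending on whether $v_{n+1}$ is an endpoint or an internal vertex, deleting $v_{n+1}$ and, if necessary, fusing its two neighbors yields a labeled orientation $P_n^\rightarrow$; the given strategy restricts to a strategy on this $P_n^\rightarrow$ whose residual is at least $g(P_n)$. By Proposition 2.4 the residual on $P_{n+1}^\rightarrow$ exceeds that on $P_n^\rightarrow$ by at least $n-1$, so $g(P_{n+1}) \geq g(P_n) + (n-1)$; combining with the upper bound gives equality.

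The delicate step is the lower-bound direction: Proposition 2.4 is stated for the specific ``minimal deviation'' strategy $s_k^*$ extending a fixed $s_k$, whereas I actually need a universal lower bound over \emph{every} strategy on $P_{n+1}^\rightarrow$. To plug this gap I would invoke Lemma 2.3, by which each predator arc trims the cumulative population by exactly $2$; hence $r_{s_k^*}(P_{n+1}^\rightarrow) - r_{s_k}(P_n^\rightarrow) = (n+1) - 2\ell$, where $\ell$ is the number of predator arcs incident to $v_{n+1}$. A short case analysis on $\ell$ (mirroring the subcases of the proposition) then reproduces the $n-1$ floor, and verifying this bookkeeping is the main technical obstacle.
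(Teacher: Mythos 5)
Your route is genuinely different from the paper's. The paper's proof is three lines: it reads off the three possible increments $n+1$, $n$ and $n-1$ from Proposition 2.4, takes their minimum over all $s_k\in S(P_n^\rightarrow)$, and passes to Definition 2.3. It never separates the two inequalities, and --- as you correctly observe --- it silently assumes that the minimum over \emph{all} strategies on $P_{n+1}^\rightarrow$ is attained among the ``minimal deviations'' $s_k^*$ that Proposition 2.4 actually addresses. Your upper bound is the sound half of that argument made explicit: append $v_{n+1}$ to an endpoint not labelled $v_1$ (one always exists since $P_n$ has two endpoints) and invoke the ``otherwise'' branch of Subcase 1.1 to get $g(P_{n+1})\leq g(P_n)+(n-1)$.

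The lower bound, however, has a genuine gap exactly at the step you flag, and the proposed repair does not close it. Writing $r_{s'}(P_{n+1}^\rightarrow)-r_{s}(P_n^\rightarrow)=(n+1)-2\ell$ for the restriction $s$ of $s'$, the case $\ell=2$ (an internal $v_{n+1}$ with both incident edges used as predator arcs) gives an increment of $n-3$, not $n-1$; the case analysis on $\ell$ does not ``reproduce the $n-1$ floor.'' Concretely, take $P_4$ with vertex sequence $v_1,v_4,v_2,v_3$ and arcs $(v_1,v_4),(v_4,v_2),(v_2,v_3)$: the greedy strategy uses all three arcs and has residual $4$, while its restriction to the fused $P_3$ uses only $(v_2,v_3)$ and has residual $4=g(P_3)+2$, so the increment over the restriction is $0=n-3$. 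The inequality $g(P_4)\geq g(P_3)+2$ survives only because when $\ell=2$ the restricted strategy necessarily forgoes the fused edge $v_pv_q$ and is therefore at least $2$ above the optimum on $P_n$ --- and \emph{that} requires the extra fact that an optimal strategy on $P_n$ saturates all $n-1$ edges, i.e.\ $g(P_n)=\frac{n(n+1)}{2}-2(n-1)$, which must be carried along as an induction hypothesis (base case $g(P_3)=2$ from Example 1). With that strengthening your argument closes; indeed, once $g(P_n)=\frac{n(n+1)}{2}-2(n-1)$ is known, the lower bound follows at once from Lemma 2.3 and the trivial bound $c(\Bbb P_{s'}(P_{n+1}^\rightarrow))\leq n$, bypassing the restriction bookkeeping entirely. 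As written, the $\ell=2$ case is unproved.
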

\begin{proof}
From Proposition 2.4 it follows that for a specific orientation of $P_n$ and for all possible predator-prey strategies,\\ \\
$g(P_{n+1}^\rightarrow) = min\{r_{s_k}(P^\rightarrow_n) + (n+1), r_{s_k}(P^\rightarrow_n) + n, r_{s_k}(P^\rightarrow_n) + (n-1)\}_{\forall s_k \in S(P^\rightarrow_n)} =\\ \\ min\{r_{s_k}(P^\rightarrow_n)\}_{\forall s_k \in S(P^\rightarrow_n)} + (n-1).$\\ \\
From definition 2.3 it then follows that $g(P_{n+1}) = g(P_n) + (n-1).$
\end{proof}
\noindent \textbf{Example 1.} Consider path $P_3$. Note vertex labelling will be from \emph{left to right}. Also note that the labelled vertices will be denoted through an \emph{ordered triplet} and the arcs through an \emph{ordered airc-pair}. The $\frac{1}{2}.3!.2^2 = 12$ distinct predator-prey webs are:\\ \\
\noindent (1) $ V(P_3^\rightarrow) = \{v_1, v_2, v_3\}$ and $A(P_3^\rightarrow) = \{(v_1, v_2), (v_2, v_3)\}$\\
\noindent (2) $ V(P_3^\rightarrow) = \{v_1, v_2, v_3\}$ and $A(P_3^\rightarrow) = \{(v_1, v_2), (v_3, v_2)\}$\\
\noindent (3) $ V(P_3^\rightarrow) = \{v_1, v_2, v_3\}$ and $A(P_3^\rightarrow) = \{(v_2, v_1), (v_2, v_3)\}$\\
\noindent (4) $ V(P_3^\rightarrow) = \{v_1, v_3, v_2\}$ and $A(P_3^\rightarrow) = \{(v_1, v_3), (v_3, v_2)\}$\\
\noindent (5) $ V(P_3^\rightarrow) = \{v_1, v_3, v_2\}$ and $A(P_3^\rightarrow) = \{(v_1, v_3), (v_2, v_3)\}$\\
\noindent (6) $ V(P_3^\rightarrow) = \{v_1, v_3, v_2\}$ and $A(P_3^\rightarrow) = \{(v_3, v_1), (v_3, v_2)\}$\\
\noindent (7) $ V(P_3^\rightarrow) = \{v_2, v_1, v_3\}$ and $A(P_3^\rightarrow) = \{(v_2, v_1), (v_1, v_3)\}$\\
\noindent (8) $ V(P_3^\rightarrow) = \{v_2, v_1, v_3\}$ and $A(P_3^\rightarrow) = \{(v_2, v_1), (v_3, v_1)\}$\\
\noindent (9) $ V(P_3^\rightarrow) = \{v_2, v_1, v_3\}$ and $A(P_3^\rightarrow) = \{(v_1, v_2), (v_1, v_3)\}$\\
\noindent (10) $ V(P_3^\rightarrow) = \{v_2, v_3, v_1\}$ and $A(P_3^\rightarrow) = \{(v_2, v_3), (v_3, v_1)\}$\\
\noindent (11) $ V(P_3^\rightarrow) = \{v_3, v_1, v_2\}$ and $A(P_3^\rightarrow) = \{(v_3, v_1), (v_1, v_2)\}$\\
\noindent (12) $ V(P_3^\rightarrow) = \{v_3, v_2, v_1\}$ and $A(P_3^\rightarrow) = \{(v_3, v_2), (v_2, v_1)\}$\\ \\
For each case the number of greedy strategies together with the residual population $r_{s_k}(P^\rightarrow_3)$ as well as the grog number $g(P^\rightarrow_3)$ will be depicted.\\ \\
\noindent (1) Number of greedy strategies = 2; $r_{s_1}(P^\rightarrow_3) = 2,$ $r_{s_2}(P^\rightarrow_3) = 2$ and $g(P^\rightarrow_3) = min\{2,2\} = 2.$\\ 
\noindent (2) Number of greedy strategies = 2; $r_{s_1}(P^\rightarrow_3) = 2,$ $r_{s_2}(P^\rightarrow_3) = 2$ and $g(P^\rightarrow_3) = min\{2,2\} = 2.$\\ 
\noindent (3) Number of greedy strategies = 2; $r_{s_1}(P^\rightarrow_3) = 2$ $r_{s_2}(P^\rightarrow_3) = 2$ and $g(P^\rightarrow_3) = min\{2, 2\} = 2.$\\ 
\noindent (4) Number of greedy strategies = 2; $r_{s_1}(P^\rightarrow_3) = 2,$ $r_{s_2}(P^\rightarrow_3) = 2$ and $g(P^\rightarrow_3) = min\{2,2\} = 2.$\\ 
\noindent (5) Number of greedy strategies = 2; $r_{s_1}(P^\rightarrow_3) = 2,$ $r_{s_2}(P^\rightarrow_3) = 2$ and $g(P^\rightarrow_3) = min\{2,2\} = 2.$\\ 
\noindent (6) Number of greedy strategies = 2; $r_{s_1}(P^\rightarrow_3) = 2,$ $r_{s_2}(P^\rightarrow_3) = 2$ and $g(P^\rightarrow_3) = min\{2, 2\} = 2.$\\ 
\noindent (7) Number of greedy strategies = 2; $r_{s_1}(P^\rightarrow_3) = 4,$ $r_{s_2}(P^\rightarrow_3) = 4$ and $g(P^\rightarrow_3) = min\{4,4\} = 4.$\\ 
\noindent (8) Number of greedy strategies = 2; $r_{s_1}(P^\rightarrow_3) = 4,$ $r_{s_2}(P^\rightarrow_3) = 4$ and $g(P^\rightarrow_3) = min\{4,4\} = 4.$\\ 
\noindent (9) Number of greedy strategies = 2; $r_{s_1}(P^\rightarrow_3) = 4,$ $r_{s_2}(P^\rightarrow_3) = 4$ and $g(P^\rightarrow_3) = min\{4,4\} = 4.$\\ 
\noindent (10) Number of greedy strategies = 2; $r_{s_1}(P^\rightarrow_3) = 2,$ $r_{s_2}(P^\rightarrow_3) = 2$ and\\ $g(P^\rightarrow_3) = min\{2,2\} = 2.$\\
\noindent (11) Number of greedy strategies = 2; $r_{s_1}(P^\rightarrow_3) = 4,$ $r_{s_2}(P^\rightarrow_3) = 4$ and\\ $g(P^\rightarrow_3) = min\{4,4\} = 4.$\\ 
\noindent (12) Number of greedy strategies = 2; $r_{s_1}(P^\rightarrow_3) = 2,$ $r_{s_2}(P^\rightarrow_3) = 2$ and\\ $g(P^\rightarrow_3) = min\{2,2\} = 2.$\\  \\
From definition 2.3 it follows that $g(P_3) = min \{2, 4\} = 2.$
\begin{theorem}
For all simple connected graphs on $n \geq 3, n \in \Bbb N$ vertices, over all indices and over all orientations of $G,$  there exists at least one indice with at least one orientation say orientation $o_i$ with corresponding directed graph $G^\rightarrow_{o_i}$ and at least another indice with at least one orientation say orientation $o_j$ with corresponding directed graph $G^\rightarrow_{o_j}$, $o_i \neq o_j$ with $g(G^\rightarrow_{o_i}) \neq g(G^\rightarrow_{o_j}).$
\end{theorem}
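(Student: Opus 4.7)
The plan is to apply Lemma 2.5, which rewrites $g(G^\rightarrow) = \sum_{i=1}^n i - 2\,c_{\max}(G^\rightarrow)$, where $c_{\max}(G^\rightarrow) = \max_{s_k \in S(G^\rightarrow)} c(\Bbb P_{s_k}(G^\rightarrow))$. Since the initial population $\sum_{i=1}^n i = n(n+1)/2$ is invariant under changes of indexing and orientation, the theorem reduces to exhibiting two indice-orientation pairs whose maximum predator-arc counts differ.

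The construction rests on the observation that $\rho(v_1)=1$ confines $v_1$ to at most one predator arc in any strategy, so if $v_1$ is assigned to a vertex of $G$-degree $d$, at least $d-1$ edges incident to $v_1$ are forced to be idle, giving the bound $c_{\max}(G^\rightarrow) \leq \epsilon(G) - (d-1)$. Because $G$ is connected with $n \geq 3$, one can fix a spanning tree $T$ of $G$, which necessarily has at least two leaves; I would produce $o_i$ by labelling a leaf of $T$ as $v_1$, and $o_j$ by labelling a vertex of maximum $G$-degree $\Delta \geq 2$ as $v_1$. For the remaining labels and the edge directions in both configurations I would use the acyclic orientation induced by the indexing (every edge directed from the lower-indexed endpoint to the higher-indexed one), because under this orientation the greedy strategy sweeping out-arcs in decreasing order of index attains the bound $\epsilon(G) - (\deg_G v_1 - 1)$ on $c_{\max}$ with equality; this equality is proved by a short induction on $\epsilon(G)$ using the associativity and commutativity of $s_k$ noted after Definition 2.3.

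The decisive step is then the strict inequality $c_{\max}(G^\rightarrow_{o_i}) > c_{\max}(G^\rightarrow_{o_j})$. When $G$ has a pendant vertex, or more generally whenever $G$ has a non-constant degree sequence, the gap $(\Delta - 1) - (\deg_G(\text{chosen leaf}) - 1) \geq 1$ gives the inequality immediately. The main obstacle is the case of $\Delta$-regular graphs with $\Delta \geq 2$, where every vertex has the same degree and the idle-edge argument alone does not separate the two candidate labellings. For those I would instead break the tie by swapping the orientation of a single edge incident to $v_1$ in the acyclic orientation, producing a second indice-orientation in which the unique predator arc available at $v_1$ points the opposite way; a careful arc-by-arc comparison of the two greedy executions using rules 1--3 of the Grog algorithm should show that the swap shifts $c_{\max}$ by exactly one. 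Verifying that no alternative strategy rerouted through the high-index vertices compensates for the lost arc---the delicate bookkeeping with the accumulated constraints $\rho(v_i) \leq i$---is the step I expect to be the most technically demanding.
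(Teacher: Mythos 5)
Your opening reduction is sound: by the identity $c(\Bbb P_{s_k}(G^\rightarrow)) = \frac{1}{2}(\sum_{i=1}^{n} i - r_{s_k}(G^\rightarrow))$ (this is Lemma 2.3 in the paper, not Lemma 2.5), and since $\sum_{i=1}^{n} i$ is invariant under re-indexing and re-orientation, it suffices to exhibit two indice--orientation pairs whose maximal predator-arc counts differ. The genuine gap is your attainment claim that the greedy sweep under the low-to-high acyclic orientation achieves $c_{\max}(G^\rightarrow) = \epsilon(G) - (\deg_G v_1 - 1)$. This cannot be right, because the constraint you exploit at $v_1$ operates at every vertex: $v_i$ can take part in at most $\rho(v_i) = i$ predation events in total (as predator and as prey combined), so at least $\deg_G(v_i) - i$ of its incident edges are forced idle whenever $\deg_G(v_i) > i$, and these losses accumulate over all low-index, high-degree vertices --- so $c_{\max}$ depends on the entire labelling, not just on where $v_1$ sits. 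Concretely, take $G = K_{1,4}$, put $v_1$ at a leaf (as your $o_i$ prescribes) and suppose the centre receives the label $v_2$: your formula predicts $c_{\max} = \epsilon(G) = 4$, but all four edges are incident to $v_2$ and $\rho(v_2) = 2$, so $c_{\max} \leq 2$. Since you never fix the remaining labels, and since once the equality fails both of your displayed quantities are mere upper bounds, the decisive strict inequality $c_{\max}(G^\rightarrow_{o_i}) > c_{\max}(G^\rightarrow_{o_j})$ does not follow. The $\Delta$-regular case (which includes every cycle $C_n$) is moreover explicitly left unproved: the claim that flipping one edge at $v_1$ shifts $c_{\max}$ by exactly one, and that no rerouted strategy compensates, is precisely the hard content and is only announced.

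For comparison, the paper argues locally rather than globally: it locates an induced $P_3$ in $G$ (available in any connected graph on $n \geq 3$ vertices), assigns it the labels $v_1, v_2, v_3$ in two ways whose grog numbers are $2$ and $4$ respectively (Example 1), and then estimates how attaching the rest of $G$ perturbs these values. (The paper's own final step is itself only an interval estimate with overlapping intervals, so it is not airtight either; but it is a genuinely different, local strategy.) If you wish to rescue your route, you need either a correct characterisation of $c_{\max}$ for the acyclic orientation --- which is a degree-constrained subgraph problem with capacity $\min(i, \deg_G v_i)$ at each vertex, not a single greedy sweep --- or a direct lower bound on $c_{\max}(G^\rightarrow_{o_i})$ that provably exceeds the upper bound $\epsilon(G) - \Delta + 1$ holding for $c_{\max}(G^\rightarrow_{o_j})$.
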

\begin{proof}
In any simple connected graph $G$ on $ n \geq 3, n \in \Bbb N$, at least one induced subgraph $\langle G\rangle_i$ on $i\leq n$ vertices exists with $\langle G \rangle_i  \simeq P_3$. Hence in any simple connected graph $G$ on $n \geq 3$, $n \in \Bbb N$ vertices we can find as a minimal indiced case, the subgraph $P_3$ with $V(P_3) = \{v_1, v_2, v_3\}.$\\ \\
Consider the orientation $o_1 = \{(v_1, v_2), (v_2, v_3)\}.$ So for $P^\rightarrow_{3, o_1}$ we have $r_{s_1}(P^\rightarrow_{3,o_1}) = 2,$ $r_{s_2}(P^\rightarrow_{3,o_1}) = 2$ and $g(P^\rightarrow_{3,o_1}) = min\{2,2\} = 2.$\\ \\
Now consider the orientation $o_2 = \{(v_2, v_1), (v_1, v_3)\}.$ So for $P^\rightarrow_{3, o_2}$ we have $r_{s_1}(P^\rightarrow_{3,o_2}) = 4,$ $r_{s_2}(P^\rightarrow_{3,o_2}) = 4$ and $g(P^\rightarrow_{3,o_2}) = min\{4,4\} = 4.$\\ \\
So we have $g(P^\rightarrow_{3,o_1}) \neq g(P^\rightarrow_{3,o_2})$.\\ \\
Considering the partial graph $G - P_3$, let $g((G - P_3)^\rightarrow_{o_i} = t$ for a specific orientation $o_i$ and a \emph{predator-prey strategy} $s_k$.\\ \\
For the path $P_3 = v_1v_2v_3$ consider the graph $G^\rightarrow$ with the \emph{predator-prey strategy} $s^*_k = (s_1, s_k)$ and the orientation $o^*_i = \{(v_1,v_2), (v_2, v_3), o_i\}.$ Clearly arcs to and from $P_3^\rightarrow$ and the rest of $G^\rightarrow$ may exist such that upon applying $s^*_k$ we have $g((G - P_3)^\rightarrow_{o_i}) - 4 = t - 4 \leq g(G^\rightarrow)_{o^*_i} \leq t+2 = g((G - P_3)^\rightarrow_{o_i}) + 2.$\\ \\
For the path $P_3 = v_2v_1v_3$ consider the graph $G^\rightarrow$ with the \emph{predator-prey strategy} $s^*_k = (s_1, s_k)$ and the orientation $o^*_j = \{(v_2,v_1), (v_1, v_3), o_j\}.$ Clearly arcs to and from $P^\rightarrow_3$ and the rest of $G^\rightarrow$ may exist such that upon applying $s^*_k$ we have $g((G - P_3)^\rightarrow_{o_j}) - 8 = t - 8 \leq g(G^\rightarrow)_{o^*_j} \leq t+4 = g((G - P_3)^\rightarrow_{o_j}) + 4.$\\ \\
Since the set of arcs between the two directed paths and the rest of $G^\rightarrow$ must specifically remain the same, it follows that:\\ \\
$g((G - P_3)^\rightarrow_{o_i}) - 4 = t - 4 \leq g(G^\rightarrow)_{o^*_i} \leq t+2 = g((G - P_3)^\rightarrow_{o_i}) + 2 \neq \\\\
g((G - P_3)^\rightarrow_{o_j}) - 8 = t - 8 \leq g(G^\rightarrow)_{o^*_j} \leq t+4 = g((G - P_3)^\rightarrow_{o_j}) + 4.$\\ \\
Therefore, in general the result that there exist at least two different orientations and at least two different indices of $G$, such that $g(G^\rightarrow_{o_i}) \neq g(G^\rightarrow_{o_j})$, follows.
\end{proof}
\begin{proposition}
If a cycle $C_n, n\geq 3$ and any specific orientation thereof say, $C_n^\rightarrow$  is extended to $C^\rightarrow_{n+1}$ the residual population over all possible predator-prey strategies applicable to $C_{n+1}^\rightarrow$ is given by:\\ \\
$r_{s^*_k}(C_{n+1}^\rightarrow) = r_{s_k}(C_n^\rightarrow) + (n-1),$\\ \\
and $s^*_k$ is the minimal deviation from $s_k$ to accommodate arcing to or from $v_{n+1}$.
\end{proposition}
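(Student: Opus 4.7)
The plan is to follow the template of Proposition 2.4 Case 3, because any extension of $C_n^\rightarrow$ to $C_{n+1}^\rightarrow$ must delete exactly one existing cycle edge $v_pv_q$ and interpose $v_{n+1}$ between $v_p$ and $v_q$ via two fresh arcs -- precisely the ``squeezed'' insertion analyzed in the path setting. I would begin by enumerating the four orientation subcases for the pair of new arcs incident to $v_{n+1}$, namely $\{(v_p,v_{n+1}),(v_{n+1},v_q)\}$, $\{(v_p,v_{n+1}),(v_q,v_{n+1})\}$, $\{(v_{n+1},v_p),(v_{n+1},v_q)\}$, and $\{(v_q,v_{n+1}),(v_{n+1},v_p)\}$, paralleling exactly Subcases 3.1--3.4 of Proposition 2.4.

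In each subcase I would define $s^*_k$ by first predatoring along both new arcs incident to $v_{n+1}$ (which drops $\rho(v_{n+1})$ from $n+1$ to $n-1$ and drops each of $\rho(v_p),\rho(v_q)$ by exactly one unit), and then executing the arc-actions of $s_k$ in their original order while omitting the single action along the now-deleted arc $v_pv_q$. The two prefix actions are feasible because $v_p, v_q$ begin with their full initial populations $p,q \geq 1$ and $v_{n+1}$ begins with $n+1 \geq 2$; the tail inherits feasibility because $v_p$ and $v_q$ have each been drained by exactly one unit, a deficit that is precisely compensated by the omitted $v_pv_q$-action. By the commutative and associative properties noted just before Lemma 2.2, the re-ordering of predator arcs does not affect the residual.

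The arithmetic is then a one-line consequence of Lemma 2.3. By construction
\begin{equation*}
c(\Bbb P_{s^*_k}(C_{n+1}^\rightarrow)) = c(\Bbb P_{s_k}(C_n^\rightarrow)) + 2 - 1 = c(\Bbb P_{s_k}(C_n^\rightarrow)) + 1,
\end{equation*}
and the cumulative initial population grows by exactly $\rho(v_{n+1}) = n+1$, so Lemma 2.3 delivers
\begin{equation*}
r_{s^*_k}(C_{n+1}^\rightarrow) = r_{s_k}(C_n^\rightarrow) + (n+1) - 2\cdot 1 = r_{s_k}(C_n^\rightarrow) + (n-1),
\end{equation*}
uniformly across all four orientation subcases.

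The main conceptual hurdle, which I would dwell on, is explaining why the special case $v_p = v_1$ or $v_q = v_1$ of Proposition 2.4 does \emph{not} produce a separate branch in the cycle setting. In the path proposition an endpoint labelled $v_1$ that becomes adjacent to $v_{n+1}$ can be stranded: it expends its single unit of population along the new arc and thereafter has no remaining neighbor in the original path to participate in the tail of $s_k$, forcing the $+n$ branch. In a cycle every vertex has degree two, so even if $v_1 \in \{v_p,v_q\}$ the vertex $v_1$ retains its second cycle-neighbor after the insertion and can still participate in the $s_k$-action along the undeleted side. Hence the arc-count change is genuinely $+1$ in all four orientation subcases and all four placements of $v_1$, yielding the single clean formula $r_{s^*_k}(C_{n+1}^\rightarrow) = r_{s_k}(C_n^\rightarrow) + (n-1)$.
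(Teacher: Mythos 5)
Your proof is correct and follows essentially the same route as the paper's: both treat the extension as squeezing $v_{n+1}$ into the deleted arc $v_pv_q$, replace the single predation along that arc by two predations through $v_{n+1}$ (in any of the four orientations of the new arcs), and observe that $v_p, v_q$ end with the same residuals as before while $v_{n+1}$ retains $n-1$. Your arithmetic via Lemma 2.3 and your explicit remark on why the $v_1$ branch of Proposition 2.4 disappears in the cycle setting are cosmetic refinements of the paper's direct population bookkeeping, not a different method.
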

\begin{proof}
Consider $C_{n+1}$. We begin by considering any specific orientation of $C_n$ and denote it $C^\rightarrow_n$. Clearly the extension from $C^\rightarrow_n$ to $C^\rightarrow_{n+1}$ is made possible by \emph{squeesing} $v_{n+1}$, (arcing) inbetween two vertices of $C^\rightarrow_n$, say $v_s, v_t$ with $1 \leq s,t \leq n$.\\ \\
Without loss of generality consider the arc $(v_s, v_t)$ in $C_n^\rightarrow$. Begin by letting $v_s$ prey on $v_t$ by predatoring along the arc $(v_s, v_t)$. After this first step of the Grog algorithm the rest of the application applies to a path $P_{n*}^\rightarrow$ with end vertices $v_s$ and $v_t$ having $\rho(v_s) = s-1$ and $\rho(v_t) = t-1$. After applying $s_k$ to this path the value $r_{s_k}(C^\rightarrow_n)$ is obtained.\\ \\
We now squeese $v_{n+1}$ inbetween $v_s, v_t$ and for any one of the four possible orientation between $v_s, v_{n+1}, v_t$ we have that if the Grog algorithm is applied to path $P_3^\rightarrow$, $V(P_3) = \{v_s, v_{n+1}, v_t\}$ we obtain $\rho(v_s) = s-1$, $\rho(v_{n+1}) = n-1$, $\rho(v_t) = t-1.$\\ \\
Furthering with the Grog algorithm we are left with exactly the path $P_{n*}^\rightarrow$ mentioned above. Hence the result:\\ \\
$r_{s^*_k}(C_{n+1}^\rightarrow) = r_{s_k}(C_n^\rightarrow) + (n-1),$\\ \\
and $s^*_k$ is the minimal deviation from $s_k$ to accommodate $v_{n+1}$, follows.
\end{proof}
\begin{corollary}
For a cycle $C_n, n \geq 3$ we have that $r_{s_k^*}(C_n^\rightarrow) = r_{s_k}(P_n^\rightarrow) -2$, with $s^*_k$ the minimal deviation from $s_k$ to accommodate an orientation of the edge $v_pv_q$ with $v_p$ and $v_q$ the end vertices of $P_n^\rightarrow$.
\end{corollary}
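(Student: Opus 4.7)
The plan is to realise the cycle $C_n^\rightarrow$ as the oriented path $P_n^\rightarrow$ augmented by one extra oriented arc $e_{pq}$ joining its end vertices $v_p$ and $v_q$, and to let the single extra predator step along $e_{pq}$ absorb the entire difference between the two residuals. In this way the corollary becomes the path-versus-cycle specialisation of the reasoning already carried out at the opening of the proof of Proposition 2.7.

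Given a path strategy $s_k$ for $P_n^\rightarrow$, I would build $s_k^*$ on $C_n^\rightarrow$ by inserting the single predator move along $e_{pq}$ into $s_k$. By the commutativity and associativity properties observed between Lemma 2.2 and Lemma 2.3, the position into which this extra move is inserted does not affect the final residual, so I would place it first purely for bookkeeping convenience. After executing it, the populations at $v_p$ and $v_q$ are each reduced by $1$ while all other populations are untouched, and the remaining arc set coincides with that of $P_n^\rightarrow$. Since a Grog move subtracts $1$ from each of the two populations involved independently of their current magnitude, running $s_k$ next lowers every vertex's final population by the same amount it would when $s_k$ is applied to $P_n^\rightarrow$ in isolation. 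Summing over vertices yields
\begin{equation*}
r_{s_k^*}(C_n^\rightarrow) = r_{s_k}(P_n^\rightarrow) - 2,
\end{equation*}
the $-2$ coming entirely from the pre-reduction at $v_p$ and $v_q$.

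The main obstacle is feasibility of the extra step: performing the predator move on $e_{pq}$ requires both $v_p$ and $v_q$ to have strictly positive population at the moment it is executed. This is automatic when $s_k$ does not exhaust either endpoint, but it can fail in borderline cases such as $v_p = v_1$ when $s_k$ already spends $v_p$ on its unique path-arc. This is exactly what the phrase \emph{minimal deviation} is designed to absorb: in such cases one reorders $s_k$ so that the move on $e_{pq}$ is carried out before the step that would otherwise deplete the endpoint, using the fact that an end vertex participates in at most one arc of $P_n^\rightarrow$ and hence loses at most one unit of population during $s_k$. I expect the bulk of a fully written-out proof to be this short case analysis, mirroring the construction at the start of the proof of Proposition 2.7 where the arc to be removed from the cycle is explicitly processed first in order to reduce the cycle problem to a path problem with depressed end-vertex populations.
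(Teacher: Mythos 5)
Your main construction is the same as the paper's: the corollary is proved there by a one-line appeal to the opening of the proof of Proposition 2.7, where the cycle-closing arc is predatored along first, leaving a path whose two end populations are each depressed by one, so that the cycle residual is the path residual minus $2$. Your insertion of the single extra move along $e_{pq}$, justified by the commutativity and associativity remarks made before Lemma 2.4, is exactly that argument, and in the non-borderline case ($p,q\geq 2$) it is complete: each end vertex of $P_n^\rightarrow$ meets only one path arc, so it retains at least one population unit and the extra move can be prepended or appended without disturbing $s_k$.

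The genuine problem is the borderline case you yourself flag, and your proposed repair does not close it. If $v_p=v_1$, then $v_1$ carries a single population unit and can therefore participate in at most one predator arc in total, whether that arc belongs to the path or to the added edge. Reordering so that the move on $e_{pq}$ comes first does not create capacity: it merely blocks the path move at $v_1$ instead, so the number of executed predator arcs is unchanged and, by Lemma 2.3, $r_{s_k^*}(C_n^\rightarrow)=r_{s_k}(P_n^\rightarrow)$ rather than $r_{s_k}(P_n^\rightarrow)-2$. Concretely, for $P_3^\rightarrow$ with arcs $(v_1,v_2)$ and $(v_2,v_3)$ one has $r_{s_k}(P_3^\rightarrow)=2$, yet on the triangle obtained by adding either orientation of $v_1v_3$ every strategy executes at most two predator arcs (two of the three edges meet $v_1$), so the residual is at least $6-4=2$ and the claimed value $0$ is unattainable. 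The identity therefore needs the hypothesis $p,q\geq 2$, mirroring the case split already present in Proposition 2.4 for paths; this is a defect of the statement and of the paper's own proof, which ignores the issue entirely, but your reordering argument should not be presented as resolving it.
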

\begin{proof}
The result follows directly from the proof of Proposition 2.7.
\end{proof}
\subsection{On Jaco graphs, $J_n(1), n \in \Bbb N$, $n \geq 2$}
As stated in Kok et. al. [3], finding a closed formula for the number of edges of a finite Jaco graph will assist in finding closed formulae for many recursive results found for Jaco graphs. In the absence of such formula we present the next proposition. We begin with a lemma.
\begin{lemma}
For a Jaco graph, $J_n(1)$, $n \geq 2$ having the Jaconian vertex $v_i$ we have that\\ $2i - n \geq 0.$
\end{lemma}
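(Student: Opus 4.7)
My plan is to use the characterization of the Jaconian vertex that is already implicit in the proof of Theorem 1.1: the Jaconian vertex $v_i$ of $J_n(1)$ is the largest-indexed vertex of $J_n(1)$ such that the arc $(v_i,v_{n+1})$ is \emph{not} present in $J_{n+1}(1)$; equivalently, $v_{i+1}$ is the smallest-indexed tail of an arc terminating at $v_{n+1}$. First I would verify that such an $i$ lies in $\{1,2,\dots,n-1\}$, so that $v_{i+1}$ is indeed a vertex of $J_n(1)$. This is immediate from the defining inequality: since $d^-(v_n)\leq n-1$, we have $2n-d^-(v_n)\geq n+1$, so $v_n$ is always a tail of an arc to $v_{n+1}$, and in particular $i\neq n$.

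Next, because $(v_{i+1},v_{n+1})$ \emph{is} an arc of $J_{n+1}(1)$, the defining inequality of a Jaco graph gives
\[
2(i+1)-d^-(v_{i+1})\;\geq\;n+1,\qquad\text{equivalently}\qquad d^-(v_{i+1})\;\leq\;2i+1-n.
\]
The key step is to obtain a matching lower bound on $d^-(v_{i+1})$. I would obtain this from the observation that the arc $(v_i,v_{i+1})$ is always present in any $J_k(1)$ with $k\geq i+1$: since $d^-(v_i)\leq i-1$, we have $2i-d^-(v_i)\geq i+1$, so the defining condition at $(v_i,v_{i+1})$ holds. Consequently $d^-(v_{i+1})\geq 1$. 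Combining with the upper bound yields $1\leq 2i+1-n$, i.e.\ $2i-n\geq 0$.

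The main (and only) delicate point is agreeing on the correct reading of the Jaconian vertex; once the characterization above is fixed, the argument reduces to two applications of the defining inequality $2k-d^-(v_k)\geq j$ together with the trivial bound $d^-(v_k)\leq k-1$. No induction on $n$ is needed, and the bound is tight, as can be checked directly in $J_2(1)$ (where $i=1$, $n=2$) and $J_4(1)$ (where $i=2$, $n=4$), both of which achieve $2i-n=0$.
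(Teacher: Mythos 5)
Your proof is correct, but it takes a genuinely different route from the paper's. The paper works directly at the Jaconian vertex $v_i$: it invokes the facts (imported from [3]) that $i + d^+_{J_n(1)}(v_i) \in \{n-1,\, n\}$ and that $v_i$ attains its full degree, so $d^-(v_i) = i - d^+(v_i)$; a short computation then yields $2i-n = d^-(v_i) \geq 0$ in the first case and $2i-n = d^-(v_i)-1 \geq 0$ in the second (using $d^-(v_i)\geq 1$ there). You instead work at $v_{i+1}$, the smallest-indexed tail of an arc to $v_{n+1}$, and squeeze $d^-(v_{i+1})$ between the upper bound $2i+1-n$ coming from the defining inequality $2(i+1)-d^-(v_{i+1})\geq n+1$ and the lower bound $1$ coming from the ever-present arc $(v_i,v_{i+1})$. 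Your version is more self-contained -- it uses only the adjacency rule stated in Section 1.2 together with the working characterization of the Jaconian vertex that the paper itself relies on in Theorem 1.1 and Proposition 2.10 -- and it dispenses with the case split; the paper's version, by contrast, identifies $2i-n$ exactly as $d^-(v_i)$ or $d^-(v_i)-1$, which is slightly more informative than a bare inequality. One small point you should make explicit: your lower bound requires $i\geq 1$ so that the vertex $v_i$ exists, but this is automatic because the Jaconian vertex is by definition a vertex of $J_n(1)$, so there is no gap.
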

\begin{proof}
From the definition of a Jaco graph $J_n(1)$, $n\geq 2$ with Jaconian vertex $v_i$ we have that either $i + d^+(v_i) = n$ or $i + d^+(v_i) = n-1.$ Therefore:\\ \\
Case 1: $i + d^+(v_i) = n\\ \\
\therefore i = n - d^+(v_i),\\
\therefore 2i = 2n - 2d^+(v_i),\\
\therefore 2i -n = 2n -2d^+(v_i) - n = n - 2d^+(v_i) = (n - d^+(v_i)) - d^+(v_i) = i - d^+(v_i).$\\ \\
Since $i - d^+(v_i) = d^-(v_i)$ and $d^-(v_i) \geq 0$ in $J_n(1)$, $n \geq 2$ the result follows.\\ \\
Case 2: $i + d^+(v_i) = n-1\\ \\
\therefore i = (n-1) - d^+(v_i),\\
\therefore 2i = 2(n-1) - 2d^+(v_i),\\
\therefore 2i -n = 2(n-1) -2d^+(v_i) - n = n - 2d^+(v_i) -2 = ((n-1) - d^+(v_i)) - d^+(v_i) - 1\\= (i - d^+(v_i)) - 1= d^-(v_i) -1.$\\ \\
Since $d^-(v_i) \geq 1$ in $J_n(1)$, $n \geq 2$ the result follows.
\end{proof}
\begin{proposition}
For a Jaco graph, $J_n(1)$, $n \geq 2$ having the Jaconian vertex $v_i$ we have that:\\ \\
$g(J_{n+1}(1)) = g(J_n(1)) + (2i - n) + 1.$
\end{proposition}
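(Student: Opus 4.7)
The plan is to reduce the computation of $g(J_n(1))$ to an edge count and then read off how the edge count grows via the inductive step already performed in Theorem~1.1. By Lemma~2.6 we have $r_{s_k}(G^\rightarrow) = \frac{n(n+1)}{2} - 2\,c(\Bbb P_{s_k}(G^\rightarrow))$, so minimising the residual is the same as maximising the number of predator arcs. Consequently $g(J_n(1)) = \frac{n(n+1)}{2} - 2\,P_n$, where $P_n$ denotes the maximum of $c(\Bbb P_{s_k})$ over all strategies $s_k \in S(J^\rightarrow_n(1))$.

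The central step is to prove $P_n = \epsilon(J_n(1))$, i.e., some strategy fires along every arc. The only property I would invoke is the Jaco inequality $d_{J_n(1)}(v_j) \leq j$ inherited from $d_{J_\infty(1)}(v_j) = j$. If every arc is tentatively declared a predator arc, each vertex $v_j$ lies in at most $d_{J_n(1)}(v_j) \leq j = \rho(v_j)$ of them. To show that such a selection can actually be produced by a legal Grog execution, I would process the arcs one at a time in an arbitrary order; immediately before an arc incident to $v_j$ is fired, $v_j$ has been used in strictly fewer than $d_{J_n(1)}(v_j)$ earlier predations, so its current population is at least $j - (d_{J_n(1)}(v_j) - 1) \geq 1$, satisfying Rule~1 of the algorithm. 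Therefore $P_n = \epsilon(J_n(1))$ and $g(J_n(1)) = \frac{n(n+1)}{2} - 2\epsilon(J_n(1))$.

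Finally, the inductive step inside the proof of Theorem~1.1 already records that extending $J_n(1)$ to $J_{n+1}(1)$ introduces exactly the arcs $(v_{i+1}, v_{n+1}), (v_{i+2}, v_{n+1}), \ldots, (v_n, v_{n+1})$, where $v_i$ is the Jaconian vertex of $J_n(1)$. So $\epsilon(J_{n+1}(1)) - \epsilon(J_n(1)) = n - i$, and substituting into the closed form gives
\[
g(J_{n+1}(1)) - g(J_n(1)) = (n+1) - 2(n - i) = 2i - n + 1,
\]
which is the claimed recurrence. Lemma~2.8 guarantees $2i - n \geq 0$, so the increment is always at least $1$, as one would hope.

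The main obstacle I anticipate is legitimising the "fire every arc" claim inside the informally stated Grog algorithm: one must check that every prospective predation respects the constraint that both endpoints still have positive population at the moment of firing, independent of the order chosen. The degree bound $d_{J_n(1)}(v_j)\leq j$ is precisely what makes this work, but the argument depends on reading Rule~1 as bounding $\ell$ by the vertex's \emph{current} population rather than by its fixed label, which is the natural interpretation.
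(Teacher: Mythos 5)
Your proposal is correct, but it reaches the recurrence by a genuinely different route from the paper. The paper argues locally: take an optimal strategy on $J_n(1)$, then additionally predator along each of the $n-i$ new arcs $(v_j,v_{n+1})$, $i+1\leq j\leq n$, which costs $n-i$ in residual population at the old vertices and leaves $(n+1)-(n-i)$ at $v_{n+1}$; summing gives $(2i-n)+1$. That argument leaves implicit both that these extra predations are always legal and that extending an optimal strategy for $J_n(1)$ yields an optimal one for $J_{n+1}(1)$. You instead prove the global closed form $g(J_n(1))=\frac{n(n+1)}{2}-2\epsilon(J_n(1))$: by the paper's Lemma 2.3, minimising $r_{s_k}$ is equivalent to maximising $c(\Bbb P_{s_k})$, which is trivially bounded by $\epsilon(J_n(1))$, and the degree bound $d_{J_n(1)}(v_j)\leq j$ guarantees that firing the arcs one at a time in any order keeps every population at or above $1$ at the moment it is needed, so the bound is attained. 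The recurrence then falls out of the edge-count increment $\epsilon(J_{n+1}(1))-\epsilon(J_n(1))=n-i$ recorded in the induction step of Theorem 1.1. Your version buys more: it settles optimality explicitly (the paper never verifies that no cleverer strategy on $J_{n+1}(1)$ beats the extended one), and the closed form is of independent interest given the paper's own remark that $g(J_n(1))$ is tied to the unknown edge-count formula. The only caveats are cosmetic: your lemma numbers are off (the residual--arc-count identity is Lemma 2.3, and $2i-n\geq 0$ is Lemma 2.9), and your reading of Rule 1 as constraining the \emph{current} population rather than the fixed label is indeed the only sensible one, consistent with Rule 3 of the Grog algorithm.
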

\begin{proof}
Consider any Jaco graph $J_n(1)$, $n \geq 2$ with Jaconian vertex $v_i$ and grog number $g(J_n(1)).$ In extending to $J_{n+1}(1)$ the vertex $v_{n+1}$ with arcs $(v_{i+1}, v_{n+1}), (v_{i+2}, v_{n+1}), ..., (v_n, v_{n+1})$ are added to $J_n(1).$ So clearly $n-i$ additional arcs are added.\\ \\
By applying the Grog algorithm to vertices $v_j$, $i+1 \leq j \leq n$ along the respective arcs $(v_j, v_{n+1})$, $i+1 \leq j \leq n$ there is a corresponding cumulative reduction in the residual population at vertices $v_j$, $i+1 \leq j \leq n$ of $n - i$. Furthermore, there is an corresponding increase in the residual population at vertex $v_{n+1}$ of $(n+1) -(n - i)$. Hence,\\ \\
$g(J_{n+1}(1)) = g(J_n(1)) - (n-i) + ((n + 1) - (n - i)) = g(J_n(1)) + (2i - n) + 1.$
\end{proof}
\begin{corollary}
For a Jaco graph, $J_n(1)$, $n \geq 2$ we have $g(J_{n+1}(1)) > g(J_n(1)).$
\end{corollary}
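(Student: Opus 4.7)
The proof plan is essentially a two-line deduction, chaining Proposition 2.10 with Lemma 2.9. First I would invoke Proposition 2.10, which already establishes the recurrence
\[
g(J_{n+1}(1)) = g(J_n(1)) + (2i - n) + 1,
\]
where $v_i$ denotes the Jaconian vertex of $J_n(1)$. This does all the combinatorial work: it pins down exactly how the grog number changes when one extra vertex $v_{n+1}$ is attached via the arcs $(v_{i+1},v_{n+1}),\dots,(v_n,v_{n+1})$.

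Next I would apply Lemma 2.9, which guarantees $2i - n \geq 0$ for the Jaconian index of any $J_n(1)$ with $n\geq 2$. Substituting this bound into the recurrence gives
\[
g(J_{n+1}(1)) \;=\; g(J_n(1)) + (2i - n) + 1 \;\geq\; g(J_n(1)) + 1,
\]
from which the strict inequality $g(J_{n+1}(1)) > g(J_n(1))$ follows at once.

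There is no real obstacle here, since all the heavy lifting has already been carried out in the lemma and the proposition; the only thing to double-check is that the hypothesis $n\geq 2$ required by Lemma 2.9 is consistent with the range of $n$ in the corollary, which it is. The proof therefore reduces to the above chain of two cited results plus the elementary observation that adding a strictly positive integer yields a strictly larger quantity.
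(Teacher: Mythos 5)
Your proposal is correct and follows essentially the same route as the paper: the paper's proof likewise combines the recurrence of Proposition 2.10 with the bound $2i - n \geq 0$ from Lemma 2.9 and the observation that adding $1$ gives a strictly larger value. Nothing is missing.
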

\begin{proof} 
Since $(2i - n) \geq 0$, $n \geq 3$ (Lemma 2.9) and $g(J_n(1)) + 1 > g(J_n(1))$, $n \geq 2$ it follows that $g(J_n(1)) + (2i - n) + 1 > g(J_n(1)),$ $n \geq 2$. Therefore, $g(J_{n+1}(1)) > g(J_n(1)),$ $n \geq 2$. 
\end{proof}
\noindent [Open problem 1: Formalise Observation 1, mathematically.] \\ 
\noindent [Open problem 2: Formalise Observation 2, mathematically.] \\ 
\noindent [Open problem 3: The Grog algorithm has been described informally. Formalise the Grog algorithm.]\\
\noindent [Open problem 4: For a given $G^\rightarrow$ find the number of possible predator-prey strategies, (cardinality of $S(G^\rightarrow))$ and if possible describe the algorithmic efficiency of determining it.] \\  \\
\textbf{\emph{Open access:}} This paper is distributed under the terms of the Creative Commons Attribution License which permits any use, distribution and reproduction in any medium, provided the original author(s) and the source are credited. \\ \\
References (Limited) \\ \\
$[1]$  Bondy, J.A., Murty, U.S.R., \emph {Graph Theory with Applications,} Macmillan Press, London, (1976). \\
$[2]$ Cohen, J. E., \emph{Interval graphs and food webs; A finding and a problem.}, Document 17696-PR, RAND Corporation, 1968. \\
$[3]$ Kok, J., Fisher, P., Wilkens, B., Mabula, M., Mukungunugwa, V., \emph{Characteristics of Finite Jaco Graphs, $J_n(1), n \in \Bbb N$}, arXiv: 1404.0484v1 [math.CO], 2 April 2014. \\
$[4]$ Merz, S.K., \emph{Competition Graphs, p-Competition Graphs, Two-Step Graphs, Squares, and Domination Graphs.}, Ph.D. thesis, University of Colorado at Denver, Department of Applied Mathematics, 1995.\\
$[5]$ Rasmussen, C.W., \emph{Interval Competition Graphs of Symmetric D-graphs and Two-Step Graphs of Trees.}, Ph.D. thesis, University of Colorado at Denver, Department of Mathematics, 1990.\\
$[6]$ Raychaudhuri, A., \emph{Intersection Assignments, T-colorings, and Powers of Graphs.}, Ph.D. thesis, Rutgers University, 1987.
\end{document}